\tikzstyle{ball} = [circle, draw]
\tikzstyle{block} = [rectangle, draw]
\tikzstyle{line} = [draw]
\title{Information Structures in AC/DC Grids}
\author{Josh A. Taylor% <-this % stops a space
\thanks{}%
%\thanks{J. A. Taylor is with the Department of Electrical and Computer Engineering, University of Toronto, Toronto, Ontario, M5S 3G4, Canada ({\tt\small josh.taylor@utoronto.ca})}
\thanks{J. A. Taylor is with the Department of Electrical and Computer Engineering, New Jersey Institute of Technology, Newark, New Jersey, 07102, United States ({\tt\small taylor.a.josh@gmail.com})}
}
\begin{document}

\maketitle

\newtheorem{theorem}{Theorem}
\newtheorem{lemma}{Lemma}
\newtheorem{corollary}{Corollary}
\newtheorem{remark}{Remark}
\newtheorem{definition}{Definition}
\newtheorem{assumption}{Assumption}
\newtheorem{result}{Result}
\newtheorem{example}{Example}

%%%%%%%%%%%%%%%%%%%%%%%%%%%%%%%%%%%%%%%%%%%%%%%%%%%%%%%%%%%%%%%%%%%%%%%%%%%%%%%%
\begin{abstract}
The converters in an AC/DC grid form actuated boundaries between the AC and DC subgrids. We show how in both simple linear and balanced $\textrm{dq}$-frame models, the states on either side of these boundaries are coupled only by control inputs. This topological property imparts all AC/DC grids with poset-causal information structures. A practical benefit is that certain decentralized control problems that are hard in general are tractable for poset-causal systems. We also show that special cases like multi-terminal DC grids can have coordinated and leader-follower information structures. 
\end{abstract}

\begin{IEEEkeywords}
AC/DC grid; information structure; poset causality; decentralized control; multi-terminal direct current grid
\end{IEEEkeywords}

\IEEEpeerreviewmaketitle

%%%%%%%%%%%%%%%%%%%%%%%%%%%%%%%%%%%%%%%%%%%%%%%%%%%%%%%%%%%%%%%%%%%%%%%%%%%%%%%%
\section{Introduction}

AC/DC grids consist of several AC and DC subgrids interfaced by power electronic converters. For example, in a point-to-point DC link, the DC subgrid is a single line, which is interfaced with one or two AC subgrids through a pair of converters. Multi-terminal DC (MTDC) grids interconnect several AC subgrids through voltage-sourced converters (VSCs)~\cite{van2016hvdc}.  In any AC/DC grid, the converters comprise actuated boundaries between the AC and DC subgrids. In this paper, we study how this topological property determines the information structure of an AC/DC grid.

The information structure of a dynamical system encodes which states influence which other states. It is natural to think in terms of subsystems---the information structure of an AC/DC grid specifies how each AC or DC subgrid influences the others. The main result of this paper is that AC/DC grids have poset-causal information structures~\cite{shah2013h2}. This is because the structure of the coupling between the subgrids can be chosen to be a directed acyclic graph (DAG). One reason this is useful is that poset-causal systems admit tractable, optimal decentralized controllers~\cite{shah2013h2}. Coordinated systems~\cite{kempker2013lq} and leader-follower systems~\cite{cruz1978leader} are successive special cases that admit even simpler decentralized controllers. An interesting question is whether these information structures make AC/DC grids amenable to other specialized tools, e.g., via controllability or observability~\cite{kempker2012controllability,ter2021realization}.

To date, few studies have made explicit use of the network structure of general AC/DC grids. There are a handful of papers that design distributed controllers for MTDC grids~\cite{johnson1993expandable,beerten2013modeling,andreasson2017distributed}. Reference~\cite{pirani2020PES} relates the controllability of systems with point-to-point DC links to the effective reactance of the AC grid. Reference~\cite{gross2022acdc} designs local controllers for general AC/DC grids; whereas we focus on information structure, they focus on specific control objectives and stability. The most closely related paper to the present is the author's prior work~\cite{vellaboyana2017DC}, which showed that DC-segmented power systems are poset-causal. 

Our original results are as follows. The poset-causality of an AC/DC grid depends on how each VSC partitions the states of the AC and DC subgrids on either side; we define this precisely in Section~\ref{sec:part}. We see that this partitioning occurs in a simple linear model in Section~\ref{sec:lin}. In Section~\ref{sec:dq}, we show how different physical approximations each lead to partitioning in a standard $\textrm{dq}$-frame model. In Section~\ref{sec:is}, we show that in both the linear and $\textrm{dq}$-frame models, the partitions can be chosen so as to yield poset-causality; this is due to the fact that an undirected graph always has an acyclic orientation~\cite{godsil2001algebraic}. We also show that a single DC subgrid connected to multiple AC subgrids, i.e., an MTDC grid, is a coordinated system; and a single AC subgrid connected to a single DC subgrid is a leader-follower system. In Section~\ref{sec:ex}, we use concepts from~\cite{kempker2013lq} to design a decentralized controller for an MTDC grid with an additional point-to-point DC link between two of the AC subgrids.

\section{Poset-causality}
A poset $\Psi$ is made up of a set $\textrm{P}$ and a binary relation $\preceq$~\cite{aigner2012comb}. The following properties hold for all $a,b,c\in \textrm{P}$.
\begin{itemize}
\item Reflexivity: $a \preceq a$;
\item Antisymmetry: $a \preceq b ~\textrm{and}~ b \preceq a \Rightarrow a=b$;
\item Transitivity: $ a \preceq b ~\textrm{and}~ b \preceq c \Rightarrow a \preceq c$.
\end{itemize}

The following two results relate graphs and posets.
\begin{result}\label{gres2}
Every DAG specifies a unique poset~\cite{oxley2011matroid}.
\end{result}
\begin{result}\label{gres1}
Given a simple undirected graph, we can choose the directions of its edges so that the resulting directed graph is acyclic~\cite{godsil2001algebraic}. This is called an acyclic orientation.
\end{result}

Given $a\in \textrm{P}$, we denote the set of upstream elements $\uparrow a=\{b\in \textrm{P}\;|\;b\preceq a\}$. The relations between elements of a poset can be encoded in a function $\sigma:\textrm{P}\times\textrm{P}\to \mathbb{R}$ such that $\sigma(a,b)=0$ when $a \not\preceq b$. Its incidence algebra, $\Psi$, is the set of all such functions. If $\textrm{P}$ has a finite number of elements, then for any $\sigma \in I(\Psi)$, there is a matrix $M$ for which $M(j,i)=\sigma(g(i),g(j))$, where $g: \mathbb{N} \to \textrm{P}$ maps row and column indices to elements of $\textrm{P}$. With a slight abuse of notation we write $M \in I(\Psi)$.

\subsection{Nonlinear systems}\label{PCNL}
Consider the system
\begin{align}
\dot{x}=f(x,u),\quad z=h(x,u),\label{SysNL}
\end{align}
where $x \in \mathbb{R}^{n}$, $u \in \mathbb{R}^{m}$, and $z$ are states, inputs, and outputs.

Suppose the system consist of $p$ subsystems. We partition $x$ into $[x_1;x_2;\dots;x_p]$, where $x_i \in \mathbb{R}^{n_i}$ and $\sum_i n_i=n$. $x_i$ are the states of subsystem $i$. We similarly partition the inputs $u \in \mathbb{R}^{m}$ into $[u_1;u_2;\dots;u_p]$, where $u_i \in \mathbb{R}^{m_i}$ and $\sum_{i}m_i=m$.

Suppose that we have a poset, $\Psi=(\textrm{P},\preceq)$, and that its elements are the subsystems, $\textrm{P}=\{1,...,p\}$. System (\ref{SysNL}) is poset-causal if we can write it as
\begin{align*}
\dot{x}_i=f_{i}\left((x_j,u_j)_{j\in\uparrow i}\right),\quad z_i=h_{i}\left((x_j,u_j)_{j\in\uparrow i}\right),\quad i\in\textrm{P}.
\end{align*}
Intuitively, each subsystem's state and output depend only on upstream subsystems.

\subsection{LTI systems}\label{PCLTI}
Consider the LTI system
\begin{align}
\dot{x}=Ax+Bu+Fw,\quad z=Cx+Du,\label{SysLin}
\end{align}
where $w$ is a disturbance. We assume $C^{\top}D=0$, $C^{\top}C$ is positive semidefinite, $D^{\top}D$ is positive definite, and $F$ is block diagonal.

We write the matrix $A$ as $[A_{ij}]_{ i,j \in \{1,\cdots,p\}}$, where $A_{ij}$ is the block indexed by the $i^{\textrm{th}}$ and $j^{\textrm{th}}$ partitions of $x$. Matrices $B,F,C$ and $D$ can be similarly organized into blocks.

Consider a poset $\Psi$, and suppose $\sigma\in I(\Psi)$. The matrix $A$ belongs to the block incidence algebra $I_A(\Psi)$ if $A_{ji}=\bm{0}$ whenever $\sigma(g(i),g(j))=0$, where $\bm{0}$ is the appropriately sized matrix of zeroes. A similar definition holds for $I_B(\Psi)$.

Let $\mathcal{A}=(sI-A)^{-1}$, and define the transfer matrices
\[
P_{11}=C\mathcal{A}F,\;P_{12}=C\mathcal{A}B+D,\;P_{21}=\mathcal{A}F,\;P_{22}=\mathcal{A}B.
\]
We can express LTI system (\ref{SysLin}) as
\begin{align*}
z=P_{11}w+P_{12}u,\quad x=P_{21}w+P_{22}u.
\end{align*}
System (\ref{SysLin}) is poset-causal if $P_{22} \in I_{P_{22}}(\Psi)$ for some poset $\Psi$. The following result from~\cite{shah2013h2} directly links poset causality to the system matrices.
\begin{result}\label{gres3}
If $A\in I_A(\Psi)$ and $B \in I_B(\Psi)$, then $P_{22} \in I_{P_{22}}(\Psi)$. 
\end{result}
In other words, (\ref{SysLin}) is poset-causal if $A$ and $B$ are in the block incidence algebra of a poset.

Given a controller $u=Kx$, the transfer matrix from the disturbance, $w$, to output, $z$, is
\begin{align}
T_{zw}=P_{11} + P_{12}K(I-P_{22}K)^{-1}P_{21}.\label{Tzw}
\end{align}
Reference~\cite{shah2013h2} solves the problem of minimizing the $\mathcal{H}_2$ norm of $T_{zw}$ over $K \in I_{K}(\Psi)$, i.e., controllers that are decentralized because they are in the same poset as (\ref{SysLin}). We describe this further in Section~\ref{sec:H2}.

\subsection{Special cases}\label{sec:sc}
The following are special cases of poset-causal systems. We refer the reader to~\cite{kempker2013lq} for a more detailed summary.

\subsubsection{Hierarchical systems}
A system is hierarchical if its graph is a single, directed tree~\cite{findeisen1980control}.

\subsubsection{Coordinated systems}\label{sec:sc:coord}
Coordinated systems are hierarchical systems of depth one~\cite{kempker2013lq}. They consist of a coordinator, $c\in\textrm{P}$, and subsystems, $i\in\downarrow c\setminus c$, for which $\downarrow i=i$.

\subsubsection{Leader-follower systems}\label{sec:sc:lf}
Leader-follower systems are coordinated systems with a single subsystem~\cite{cruz1978leader,swigart2010explicit}.

\section{Network modeling}
\subsection{Graph structure}
Let $\mathcal{N}^{\textrm{A}}$ and $\mathcal{N}^{\textrm{D}}$ be buses in the AC and DC parts of the network. Let $\mathcal{E}^{\textrm{A}}$ be the set of AC lines. If $ij\in\mathcal{E}^{\textrm{A}}$, then $i$ and $j\in\mathcal{N}^{\textrm{A}}$. Similarly, let $\mathcal{E}^{\textrm{D}}$ be the set of DC lines. Let $\mathcal{C}$ be the set of converters. If $ij\in\mathcal{C}$, the either $i\in\mathcal{N}^{\textrm{A}}$ and $j\in\mathcal{N}^{\textrm{D}}$ or vice versa. The graphs $\left(\mathcal{N}^{\textrm{A}},\mathcal{E}^{\textrm{A}}\right)$ and $\left(\mathcal{N}^{\textrm{D}},\mathcal{E}^{\textrm{D}}\right)$ are undirected. The graph $\left(\mathcal{N}^{\textrm{A}}\cup\mathcal{N}^{\textrm{D}},\mathcal{C}\right)$ is directed, and if $ij\in\mathcal{C}$, then $ji\notin\mathcal{C}$.

Suppose that there are $m^{\textrm{A}}$ and $m^{\textrm{D}}$ connected AC and DC subgraphs in $\left(\mathcal{N}^{\textrm{A}},\mathcal{E}^{\textrm{A}}\right)$ and $\left(\mathcal{N}^{\textrm{D}},\mathcal{E}^{\textrm{D}}\right)$. Let $\left(\mathcal{N}^{\textrm{A}}_{k},\mathcal{E}^{\textrm{A}}_k\right)$ and $\left(\mathcal{N}^{\textrm{D}}_l,\mathcal{E}^{\textrm{D}}_l\right)$ be the $k^{\textrm{th}}$ and $l^{\textrm{th}}$ such subgraphs for $k=1,...,m^{\textrm{A}}$ and $l=1,...,m^{\textrm{D}}$. Define the mapping $\mathcal{M}^{\textrm{A}}$ such that if $i\in \mathcal{N}_k^{\textrm{A}}$, $\mathcal{M}^{\textrm{A}}(i)=k$; $\mathcal{M}^{\textrm{A}}$ identifies which AC subgraph each bus belongs to. Similarly, define $\mathcal{M}^{\textrm{D}}$ such that if $i\in \mathcal{N}_k^{\textrm{D}}$, $\mathcal{M}^{\textrm{D}}(i)=k$.

Let $\left(\mathcal{P},\mathcal{G}\right)$ be a simple directed graph such that if $k$ and $l\in \mathcal{P}$ and $kl\in \mathcal{G}$, then there exists $i\in\mathcal{N}_k^{\textrm{A}}$ and $j\in\mathcal{N}_l^{\textrm{D}}$ (or $i\in\mathcal{N}_l^{\textrm{A}}$ and $j\in\mathcal{N}_k^{\textrm{D}}$) such that $ij\in\mathcal{C}$. Observe that $\left(\mathcal{P},\mathcal{G}\right)$ is bipartite. This is because an AC subgrid can only be converter-connected to DC subgrids, and vice versa.

The directions of the edges in $\mathcal{G}$ are determined by those of $\mathcal{C}$. We choose the directions of the edges in $\mathcal{C}$ such that $\left(\mathcal{P},\mathcal{G}\right)$ is a DAG; we can always do so because, as stated in Result~\ref{gres1}, every undirected graph has at least one acyclic orientation.  $\left(\mathcal{P},\mathcal{G}\right)$ specifies a unique poset, which we denote $\Phi=(\mathcal{P},\preceq)$; note that the nodes in $\mathcal{P}$ are now also the elements of the poset. Observe that if $\mathcal{M}(i)\preceq \mathcal{M}(j)$, either $\mathcal{M}(i)=\mathcal{M}(j)$ or there is a path from $i$ to $j$ through $\mathcal{E}$.

\subsection{State partitions}\label{sec:part}
In the models we present later in Sections~\ref{sec:lin} and \ref{sec:dq}, a converter either partially or fully decouples the AC and DC states on either side. In this section, we specify how this decoupling determines the direction of each converter in the set $\mathcal{C}$.

Each state is associated with a node or an edge in the system's graph. Suppose $x$ is the vector of AC states and $x_i$ the subvector associated with AC bus $i\in\mathcal{N}^{\textrm{A}}$. Let $y$ similarly be the vector of DC states and $y_j$ the subvector for $j\in\mathcal{N}^{\textrm{D}}$.

\begin{definition}[One-way partition]
Let $i\in\mathcal{N}^{\textrm{A}}$ and $j\in\mathcal{N}^{\textrm{D}}$ be connected by a converter. The converter partitions the state one way if one of the following are true.
\begin{itemize}
\item The evolution of $x_i$ does not depend on $y_j$. In this case $ij\in\mathcal{C}$.
\item The evolution of $y_j$ does not depend on $x_i$. In this case $ji\in\mathcal{C}$.
\end{itemize}
\end{definition}

In this manner, the direction of a converter in $\mathcal{C}$ encodes the direction that physical information flows through it.

\begin{definition}[Full partition]
Let $i\in\mathcal{N}^{\textrm{A}}$, and $j\in\mathcal{N}^{\textrm{D}}$ be connected by a converter. The converter fully partitions the state if the evolution of $x_i$ does not depend on $y_j$, and vice versa. In this case we may choose whether $ij$ or $ji\in\mathcal{C}$.
\end{definition}

A pair of one-way partitions in both directions forms a full partition. In the next two sections, we present several models in which the converter partitions the state.

A converter's direction affects its control structure. Consider a converter between buses $i\in\mathcal{N}^{\textrm{A}}$ and $j\in\mathcal{N}^{\textrm{D}}$. Assume $ij\in\mathcal{C}$, either due to a one-way partition or because we have chosen this direction, and let $z_{ij}$ be the vector of control variables. We will regard the converter as a part of AC subgrid $\mathcal{M}^{\textrm{A}}(i)$, and not DC subgrid $\mathcal{M}^{\textrm{D}}(j)$. We are in effect associating $z_{ij}$ with bus $i$, so that it influences bus $j$ and not vice versa; this implies $\mathcal{M}^{\textrm{A}}(i)\prec \mathcal{M}^{\textrm{D}}(j)$. If poset causality is to be preserved, then $z_{ij}$ can only receive feedback based on states in subgrid $\mathcal{M}^{\textrm{A}}(i)$ (or further upstream in the poset). If $z_{ij}$ were to receive feedback from a state in subgrid $\mathcal{M}^{\textrm{D}}(j)$, information would flow from $j$ to $i$, violating the partition.

\section{Linear model}\label{sec:lin}
In this simple linear model, the converters are represented by controllable current transfers. We use the standard linear power flow approximation to model the AC part of the system, which contains most of the generation and load, and model the DC part of the system as a linear circuit.

The converters fully partition the state in this model, and hence can have either direction. A given converter $ij\in{\mathcal{C}}$, $i\in\mathcal{N}^{\textrm{A}}$, and $j\in\mathcal{N}^{\textrm{D}}$, has control input $\zeta_{ij}$, the current it injects on the DC side. Let $\hat{v}_j$ be the constant nominal voltage on the DC side and $p_{ij}$ the power on the AC side. Then, assuming a lossless converter as in~\cite{andreasson2017distributed}, conservation of power gives $p_{ij}=\hat{v}_j\zeta_{ij}$.

The AC states are the voltage angles, $\theta_i$, and frequencies, $\omega_i$, for $i\in\mathcal{N}^{\textrm{A}}$. The dynamics at bus $i\in\mathcal{N}^{\textrm{A}}$ are
\begin{subequations}
\label{linmod}
\begin{align}
\dot{\theta}_i&=\omega_i\\
J_i\dot{\omega}_i&=P_i-D_i\omega_i-\sum_{j:ij\in\mathcal{E}^{\textrm{A}}}B_{ij}(\theta_i-\theta_j)\nonumber\\
&\quad\quad + \left\{
\begin{array}{ll}
-\hat{v}_j\zeta_{ij} &\textrm{if } ij\in\mathcal{C} \textrm{ for some } j\\
\hat{v}_j\zeta_{ji} &\textrm{if } ji\in\mathcal{C} \textrm{ for some } j\\
0 & \textrm{otherwise},
\end{array}
\right.
\end{align}
where $P_i$ is the generation or load, $J_i$ and $D_i$ are the rotor inertia and damping, and $B_{ij}$ is the line susceptance.

The DC states are the voltages, $v_i$ for $i\in\mathcal{N}^{\textrm{D}}$, and currents, $i_{ij}$ for $ij\in\mathcal{E}^{\textrm{D}}$. The dynamics at bus $i\in\mathcal{N}^{\textrm{D}}$ are
\begin{align}
C_i\dot{v}_i&=\sum_{j:ij\in\mathcal{E}^{\textrm{D}}}i_{ij}+ \left\{
\begin{array}{ll}
-\zeta_{ij} &\textrm{if } ij\in\mathcal{C} \textrm{ for some } j\\
\zeta_{ji} &\textrm{if } ji\in\mathcal{C} \textrm{ for some } j\\
0 & \textrm{otherwise},
\end{array}
\right.
\end{align}
where $C_i$ is the bus's capacitance. The dynamics of line $ij\in\mathcal{E}^{\textrm{D}}$ are
\begin{align}
L_{ij}\dot{i}_{ij} = v_i-v_j-R_{ij}i_{ij},
\end{align}
\end{subequations}
where $L_{ij}$ and $R_{ij}$ are the line's inductance and resistance.

Given a converter between $i\in\mathcal{N}^{\textrm{A}}$ and $j\in\mathcal{N}^{\textrm{D}}$, the control variable $\zeta_{ij}$ fully partitions the states on either side---the evolutions of $\theta_i$ and $\omega_i$ do not depend on $v_j$ or $i_{jk}$, $jk\in\mathcal{E}^{\textrm{D}}$. We may thus choose whether $ij$ or $ji\in\mathcal{C}$. If we choose, say, $ij\in\mathcal{C}$, then we are associating $\zeta_{ij}$ with bus $i$, so that it influences bus $j$ and not vice versa.

\section{Nonlinear $\textrm{dq}$-frame model}\label{sec:dq}
We now describe the standard balanced $\textrm{dq}$-frame model, for which we follow the presentation of Chapter 17 in~\cite{jovcic2019high}. We do not explicitly model the AC and DC parts of the grid, which could be simple as in Section~\ref{sec:lin} or as complicated as desired.

Consider a VSC between buses $i\in\mathcal{N}^{\textrm{A}}$ and $j\in\mathcal{N}^{\textrm{D}}$. The control inputs are $m_{ij}^{\textrm{d}}$ and $m_{ij}^{\textrm{q}}$, the $\textrm{d}$ and $\textrm{q}$ components of the averaged switching signal.

The AC-side converter states are the currents, $i_{ij}^{\textrm{d}}$ and $i_{ij}^{\textrm{q}}$. The component voltages at AC bus $i$ are $v_{i}^{\textrm{d}}$ and $v_{i}^{\textrm{q}}$. Let $L_{ij}$ and $R_{ij}$ be the combined inductance and resistance of the converter's transformer and filter. The dynamics of the AC-side converter currents are
\begin{subequations}
\label{dynnonlinAC}
\begin{align}
L_{ij}\dot{i}_{ij}^{\textrm{d}} &= v_{i}^{\textrm{d}}+\omega L_{ij} i_{ij}^{\textrm{q}} - R_{ij}i_{ij}^{\textrm{d}} - \frac{1}{2}v_j^{\textrm{D}}m_{ij}^{\textrm{d}}\\
L_{ij}\dot{i}_{ij}^{\textrm{q}} &= v_{i}^{\textrm{q}} + \omega L_{ij} i_{ij}^{\textrm{d}} - R_{ij}i_{ij}^{\textrm{q}}- \frac{1}{2}v_j^{\textrm{D}}m_{ij}^{\textrm{q}},
\end{align}
\end{subequations}
where $v_j^{\textrm{D}}$ is the DC-side capacitor voltage.

Let $C_{ij}$ be the DC-side capacitance. Let $\zeta_{ij}$ be DC-side converter current, and $i_{ij}$ be the current from the converter to the DC node $j$. The dynamics are
\begin{subequations}
\label{dynnonlinDC}
\begin{align}
C_{ij}\dot{v}_j^{\textrm{D}}&=\zeta_{ij}-i_{ij},
\end{align}
where
\begin{align}
\zeta_{ij}&=\frac{3}{4}\left(i_{ij}^{\textrm{d}}m_{ij}^{\textrm{d}} + i_{ij}^{\textrm{q}}m_{ij}^{\textrm{q}}\right).\label{zeta}
\end{align}
\end{subequations}

The DC-side power is given by
\begin{align*}
P^{\textrm{D}}_{ij}=v_j^{\textrm{D}}\zeta_{ij}.
\end{align*}
The real and reactive powers on the AC side are given by
\begin{align*}
P^{\textrm{A}}_{ij}=\frac{3}{4}\left(v_{i}^{\textrm{d}}i_{ij}^{\textrm{d}} + v_{i}^{\textrm{q}}i_{ij}^{\textrm{q}}\right),\quad Q^{\textrm{A}}_{ij}=\frac{3}{4}\left(-v_{i}^{\textrm{d}}i_{ij}^{\textrm{q}} + v_{i}^{\textrm{q}}i_{ij}^{\textrm{d}}\right).
\end{align*}
Conservation of power implies $P^{\textrm{D}}_{ij}=P^{\textrm{A}}_{ij}$ (on average).

As written, the states are not partitioned because the DC voltage, $v_j^{\textrm{D}}$, influences the AC currents in (\ref{dynnonlinAC}), and the AC currents, $i_{ij}^{\textrm{d}}$ and $i_{ij}^{\textrm{q}}$, influence $v_j^{\textrm{D}}$ through (\ref{dynnonlinDC}).

In the following subsections, we obtain partitions in several ways.
\begin{itemize}
\item In Section~\ref{sec:partsub}, redefining the control inputs leads to one-way partitions.
\item In Section~\ref{sec:part12}, holding voltage constant leads to one-way partitions.
\item In Section~\ref{sec:1plus1}, we combine one-way partitions to obtain full partitions.
\item In Section~\ref{sec:timescale}, we assume an inner current controller is fast enough that the converter currents are in steady state, which is standard practice today. The currents are always equal to their setpoints, and thus become control inputs. This fully partitions the AC and DC states.
\end{itemize}

\subsection{Partitioning via substitution}\label{sec:partsub}
We can obtain one-way partitions simply by redefining the control inputs. The first below is standard (see Section 17.7.3 in~\cite{jovcic2019high}), and the latter new.

\subsubsection{A standard substitution}\label{sec:betasub}
To decouple the $\textrm{d}$ and $\textrm{q}$ currents, define the new control inputs
\begin{align}
\beta^{\textrm{d}}_{ij}=2\frac{m^{\textrm{d}}_{ij}-\omega L_{ij} i^{\textrm{q}}_{ij}}{v_j^{\textrm{D}}},\quad \beta^{\textrm{q}}_{ij}=2\frac{m^{\textrm{q}}_{ij}-\omega L_{ij} i^{\textrm{d}}_{ij}}{v_j^{\textrm{D}}}.\label{subbeta}
\end{align}
Substituting, (\ref{dynnonlinAC}) becomes
\begin{subequations}
\label{dynnonlinACbeta}
\begin{align}
L_{ij}\dot{i}_{ij}^{\textrm{d}} &= v_{i}^{\textrm{d}}- R_{ij}i_{ij}^{\textrm{d}} - \beta_{ij}^{\textrm{d}}\\
L_{ij}\dot{i}_{ij}^{\textrm{q}} &= v_{i}^{\textrm{q}} -R_{ij}i_{ij}^{\textrm{q}} - \beta_{ij}^{\textrm{q}}.
\end{align}
\end{subequations}
The AC-side converter currents now depend on no DC-side states. The DC voltage still depends on $i_{ij}^{\textrm{d}}$ and $i_{ij}^{\textrm{q}}$. With only this substitution, the direction of the corresponding edge is from $i$ to $j$, i.e., $ij\in\mathcal{C}$.

\subsubsection{Another substitution}\label{sec:rhosub}
Set
\begin{align}
\rho_{ij}^{\textrm{d}}=i_{ij}^{\textrm{d}}m_{ij}^{\textrm{d}},\quad\rho_{ij}^{\textrm{q}}=i_{ij}^{\textrm{q}}m_{ij}^{\textrm{q}}.\label{subrho}
\end{align}
Substituting, (\ref{dynnonlinAC}) becomes
\begin{subequations}
\label{dynnonlinACrho}
\begin{align}
L_{ij}\dot{i}_{ij}^{\textrm{d}} &= v_{i}^{\textrm{d}}+ \omega L_{ij} i_{ij}^{\textrm{q}} - R_{ij}i_{ij}^{\textrm{d}} - \frac{1}{2i_{ij}^{\textrm{d}}}v_j^{\textrm{D}}\rho_{ij}^{\textrm{d}}\\
L_{ij}\dot{i}_{ij}^{\textrm{q}} &= v_{i}^{\textrm{q}} + \omega L_{ij} i_{ij}^{\textrm{d}} - R_{ij}i_{ij}^{\textrm{q}} - \frac{1}{2i_{ij}^{\textrm{q}}}v_j^{\textrm{D}}\rho_{ij}^{\textrm{q}},
\end{align}
\end{subequations}
and (\ref{zeta}) becomes
\begin{align}
\zeta_{ij}&=\frac{3}{4}\left(\rho_{ij}^{\textrm{d}}+ \rho_{ij}^{\textrm{q}}\right).\label{zetarho}
\end{align}

The DC voltage, $v_j^{\textrm{D}}$, still influences the AC-side states through (\ref{dynnonlinACrho}). Because $\zeta_{ij}$ now only depends on the control inputs, the AC-side states do not influence the DC voltage. The direction of the edge is therefore from $j$ to $i$, i.e., $ji\in\mathcal{C}$. We remark that unlike in Section~\ref{sec:betasub}, this substitution serves no purpose beyond creating a one-way partition.

\subsection{Partitioning via constant voltage}\label{sec:part12}
We now show how approximating either the AC or DC voltage as constant leads to a one-way partition.

\subsubsection{Tightly regulated DC voltage}\label{sec:tightDC}
The converter's DC voltage is usually tightly regulated (see Section 17.7.2 in~\cite{jovcic2019high}). We may thus set $\dot{v}_j^{\textrm{D}}=0$, so that $v_j^{\textrm{D}}$ is constant and $i_{ij}=\zeta_{ij}$. This partitions the state one way in that the AC-side currents are directly coupled to the DC bus states, but no DC-side states affect the AC-side currents. Under this approximation, the direction of the corresponding edge is from $i$ to $j$, i.e., $ij\in\mathcal{C}$.

\subsubsection{Tightly regulated AC voltages}\label{sec:tightACDC}
The converter's AC voltages are also usually tightly regulated (see Section 17.7.1 in~\cite{jovcic2019high}). We represent this by setting $v_{i}^{\textrm{q}}=0$ and assuming $v_{i}^{\textrm{d}}$ is constant, which means the AC-side voltage is on the $\textrm{d}$ axis. Under this approximation, the AC-side currents, $i_{ij}^{\textrm{d}}$ and $i_{ij}^{\textrm{q}}$, have no dependence on the AC bus voltages. $i_{ij}^{\textrm{d}}$ and $i_{ij}^{\textrm{q}}$ do still depend on the DC-side voltage, $v_j^{\textrm{D}}$. Therefore, the direction of the corresponding edge is from $j$ to $i$, i.e., $ji\in\mathcal{C}$.

\subsection{Combining one-way partitions}\label{sec:1plus1}
We can obtain a full partition by combining one-way partitions in several different ways.
\begin{itemize}
\item Assume tightly regulated AC and DC voltages. The remaining converter states, $i_{ij}^{\textrm{d}}$ and $i_{ij}^{\textrm{q}}$, depend only on the converter controls, $m_{ij}^{\textrm{d}}$ and $m_{ij}^{\textrm{q}}$, and not on any AC or DC grid states.
\item Make the substitution in Section~\ref{sec:betasub} and assume tightly regulated AC voltages. Examining (\ref{dynnonlinACbeta}), $i_{ij}^{\textrm{d}}$ and $i_{ij}^{\textrm{q}}$ depend only $\beta_{ij}^{\textrm{d}}$ and $\beta_{ij}^{\textrm{q}}$, and not on any AC or DC grid states.
\item Make the substitution in Section~\ref{sec:rhosub} and assume tightly regulated DC voltages. Now (\ref{dynnonlinACrho}) depends on $\rho_{ij}^{\textrm{d}}$ and $\rho_{ij}^{\textrm{q}}$, but not any DC-side states. The only input from the converter to the DC side is $\zeta_{ij}$, which depends only on the control variables through (\ref{zetarho}).
\end{itemize}

In the first two cases, the full partition is through $i_{ij}^{\textrm{d}}$ and $i_{ij}^{\textrm{q}}$, which depend only on the converter control inputs. In the third, the full partition is through $\zeta_{ij}$ in that the states on either side are uncoupled. In all cases, we may choose the converter's direction, i.e., whether $ij$ or $ji\in\mathcal{C}$.

\subsection{Partitioning via timescale separation}\label{sec:timescale}
The converter currents are usually controlled locally on a faster timescale  (see Section 17.8 in~\cite{jovcic2019high}). We may assume they are in steady state, and therefore always at their setpoints. The control inputs are thus $i_{ij}^{\textrm{d}}$ and $i_{ij}^{\textrm{q}}$. Because there is no other coupling across the converter, the converter fully partitions the states under this assumption.

Today, it is common to have a local PID loop use $i_{ij}^{\textrm{q}}$ to regulate reactive power, and in turn the AC voltage magnitude. $i_{ij}^{\textrm{d}}$ is then used to control either the converter's power transfer or DC-side voltage. Each case entails a feedback loop with either AC- or DC-side states. As a result, the converter no longer fully partitions the states, but only one way. A converter's local control loops, if it has any, thus dictate its direction in $\mathcal{C}$.

Let $i\in\mathcal{N}^{\textrm{A}}$, $j\in\mathcal{N}^{\textrm{D}}$, and assume the converter fully partitions the state. The above control loops specify the converter's direction as follows.
\begin{itemize}
\item If reactive power is regulated by $i_{ij}^{\textrm{q}}$, then $ij\in\mathcal{C}$. This is because information about reactive power is on the AC-side  (see Section 17.8.1 in~\cite{jovcic2019high}).
\item If the power transfer is regulated by $i_{ij}^{\textrm{d}}$, then $ji\in\mathcal{C}$. This is because information about power transfer is taken on the DC side  (see Section 17.8.2 in~\cite{jovcic2019high}). In principle, this information could be ontained on the AC side, e.g., via real power, in which case $ij\in\mathcal{C}$.
\item If DC voltage is regulated by $i_{ij}^{\textrm{d}}$, then $ji\in\mathcal{C}$. This is because information about the DC voltage is on the DC side  (see Section 17.8.3 in~\cite{jovcic2019high}).
\end{itemize}

\subsection{Discussion}

There are a number of ways to obtain one-way and full partitions in the $\textrm{dq}$-frame model. Figure~\ref{fig:c} illustrates how each modification disables couplings between the states, and the new couplings induced by the local controllers in Section~\ref{sec:timescale}.

\begin{figure*}[!hbt]
\centering
\begin{tikzpicture}[every node/.style={sloped}]
    \node (vd){$v_i^{\textrm{d}}$};
    \node [below=6cm of vd] (vq){$v_i^{\textrm{q}}$};
    \node [right=5cm of vd] (id){$i_{ij}^{\textrm{d}}$};
    \node [right=5cm of vq] (iq){$i_{ij}^{\textrm{q}}$};
    \node [below right=3cm and 5cm of id] (dv){$v_j^{\textrm{D}}$};
    \node [right=5cm of dv] (di){$i_{ij}$};
    \draw [<->, thick] (id) --node[below]{Substitution (\ref{subbeta})} (iq);
    \draw [<->, thick] (vd) -- node[above]{Timescale\hspace{0.7cm}Constant $v_i^{\textrm{d}}$}(id);
    \draw [<->, thick] (vq) -- node[above]{Timescale\hspace{1cm}$v_i^{\textrm{q}}=0$}(iq);
    \draw [<->, thick] (id)  -- node[below]{\shortstack{Constant $v_j^{\textrm{D}}$\\or\\Sub. (\ref{subbeta})}\hspace{1cm}  \shortstack{Timescale\\or\\Sub. (\ref{subrho})}}(dv);
    \draw [<->, thick] (iq) to node[below]{\shortstack{Constant $v_j^{\textrm{D}}$\\or\\Sub. (\ref{subbeta})}\hspace{1cm} \shortstack{Timescale\\or\\Sub. (\ref{subrho})}}(dv);
    \draw [<->, thick] (dv) -- node[above]{\hspace{1.6cm}Constant $v_j^{\textrm{D}}$}(di);
    \draw [->, thick, dashed, bend right=20] (dv) to node[above]{Regulate DC voltage}(id);
    \draw [->, thick, dashed, bend right=20] (vq) to node[below]{Regulate AC voltage}(iq);
    \draw [->, thick, dashed, bend left=10] (vd) to node[above]{Regulate AC voltage}(iq);
    \draw [->, thick, dashed, bend right=30] (di) to node[above]{Regulate power}(id);
    \end{tikzpicture}
\caption{The nodes in the above graph are states in the $\textrm{dq}$-frame model of converter $ij\in\mathcal{C}$, (\ref{dynnonlinAC})-(\ref{dynnonlinDC}), and at the adjacent buses. Solid lines represent physical couplings between states. The text and its location indicates the coupling direction disabled by each modification; e.g., fixing $v_i^{\textrm{d}}$ eliminates the coupling from $v_i^{\textrm{d}}$ to $i_{ij}^{\textrm{d}}$. The dashed lines represent the one-way couplings induced by local control loops; e.g., regulating DC voltage makes $i_{ij}^{\textrm{d}}$ depend on $v_j^{\textrm{D}}$.}
\label{fig:c}
\end{figure*}
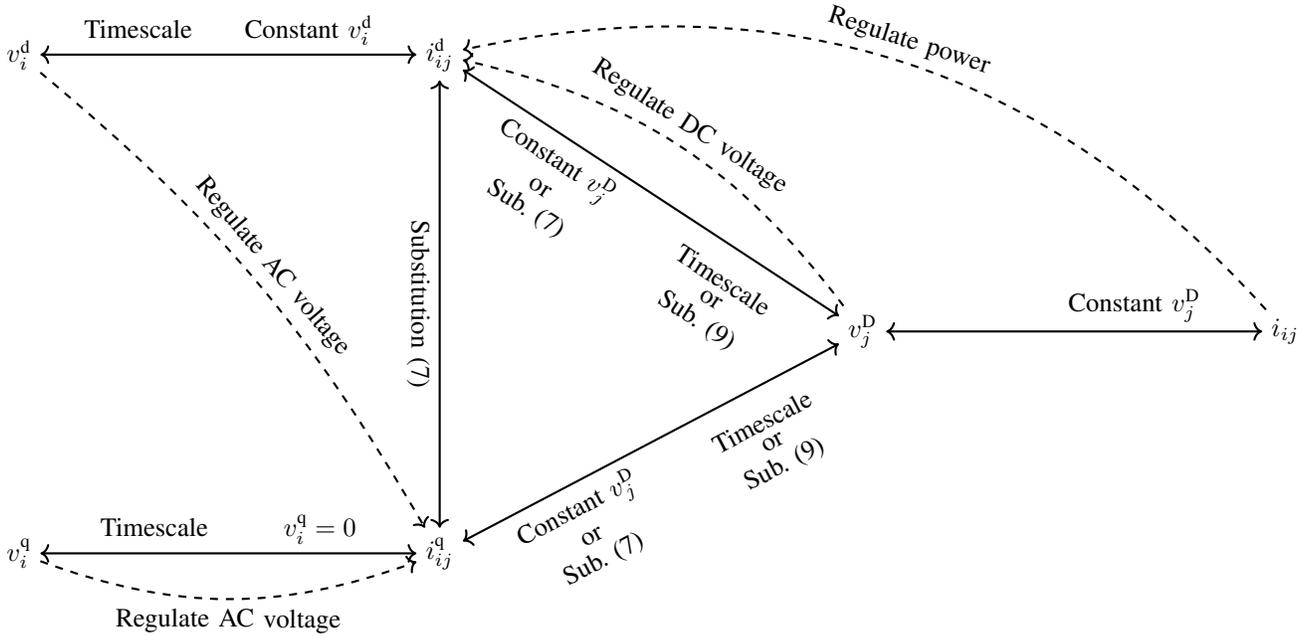

Our analysis aligns with the intuition that converters dynamically decouple the systems on either side, especially on slower timescales. In this paper, we seek to orient the converters so that $(\mathcal{P},\mathcal{G})$ is a DAG, and therefore the system model is poset-causal. The following example looks at how the control loops in Section~\ref{sec:timescale} restrict the number of acyclic orientations.

\begin{example}[Point-to-point DC link]\label{ex:p2p}
Consider a single DC line with converters and both ends. A common configuration is for one converter to regulate the DC voltage, and the other the power transfer~\cite{iravani2012vsc}.

There are two DC and two AC buses. Assume also that the two AC buses are connected by an AC line. We have $\mathcal{N}^{\textrm{A}}=\{1,4\}$, $\mathcal{N}^{\textrm{D}}=\{2,3\}$, $\mathcal{E}^{\textrm{A}}=\{14\}$, and $\mathcal{E}^{\textrm{D}}=\{23\}$. We must choose directions for the converters between buses 1 and 2 and between buses 3 and 4. If $\mathcal{C}=\{12,34\}$ or $\mathcal{C}=\{21,43\}$, the system is not poset-causal. The system is poset-causal if $\mathcal{C}=\{12,43\}$ or $\mathcal{C}=\{21,34\}$; we assume the latter, as in Figure~\ref{fig:p2p}. Because there are only two subgrids, this is also a leader-follower system, as described in Section~\ref{lflin}. Here the DC subgrid is the leader and the AC subgrid the follower.

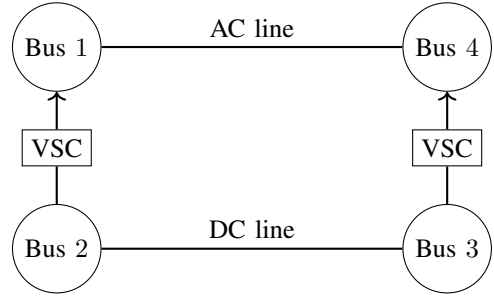
\begin{figure}[!h]
\centering
\begin{tikzpicture}
    \node [ball] (B1){Bus $1$};
    \node [ball, right=4cm of B1] (B2) {Bus $4$};
    \node [block, below=5mm of B1] (C1) {VSC};
    \node [block, below=5mm of B2] (C2) {VSC};
    \node [ball, below=5mm of C1] (B3) {Bus $2$};
    \node [ball, below=5mm of C2] (B4) {Bus $3$};
    \draw [-, thick] (B1) -- node[above]{AC line}(B2);
    \draw [-, thick] (B3) -- node[above]{DC line}(B4);
    \draw [<-, thick] (B1) -- (C1);
    \draw [-, thick] (C1) -- (B3);
    \draw [<-, thick] (B2) -- (C2);
    \draw [-, thick] (C2) -- (B4);
\end{tikzpicture}
\caption{A point-to-point DC link. The converter directions make the system poset-causal.}
\label{fig:p2p}
\end{figure}

Assume timescale separation as in Section~\ref{sec:timescale}, and that the converter current setpoints depend on AC- or DC-side states via local control loops. The following pair of controllers (and vice versa) is compatible with the converter directions we've chosen.
\begin{itemize}
\item Converter 21 regulates the DC voltage at bus 2.
\item Converter 34 regulates the power transfer by feeding back the power at bus 3.
\end{itemize}

Unfortunately, if either converter regulates its AC-side reactive power (or voltage), it must also feed back AC-side states, eliminating a one-way partition and hence the model's poset-causality. As reactive power mostly affects local voltages, it is reasonable to omit this control loop when focusing on system-level questions. \hfill$\triangle$
\end{example}

There are many choices leading to poset-causality. If interested in moving energy through large grids, then modeling the converter as a controllable current/power transfer as in Section~\ref{sec:lin} may be adequate. The $\textrm{dq}$-frame model in Section~\ref{sec:dq} is appropriate if reactive power or AC voltage dynamics are important. In this case, the timescale approximation of Section~\ref{sec:timescale} is relatively simple and aligned with current practice. These models are representative but not comprehensive; e.g., a $\textrm{dq0}$-frame model would be appropriate for unbalanced AC grids, and a model with positive and negative sequences can capture other converter control strategies. Analyzing partitioning and information structures in such settings is a topic of future work.

\section{Information structures}\label{sec:is}

We now discuss the poset-causality of the systems in Sections~\ref{sec:lin} and \ref{sec:dq}. The results are essentially more general versions of Lemma 1 in~\cite{vellaboyana2017DC}, which established the poset-causality of a linearized model with only point-to-point DC links.

\begin{lemma}\label{PClin}
System (\ref{linmod}) is poset-causal if $(\mathcal{P},\mathcal{G})$ is a DAG.
\end{lemma}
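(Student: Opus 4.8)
The plan is to verify the hypotheses of Result~\ref{gres3}: if I can show that for system~(\ref{linmod}), written with subsystems indexed by $\mathcal{P}$ (the AC and DC subgrids), the state matrix $A$ lies in $I_A(\Phi)$ and the input matrix $B$ lies in $I_B(\Phi)$, then $P_{22}\in I_{P_{22}}(\Phi)$ and the system is poset-causal by definition. Here $\Phi=(\mathcal{P},\preceq)$ is the poset specified by the DAG $(\mathcal{P},\mathcal{G})$ via Result~\ref{gres2}, which exists precisely because $(\mathcal{P},\mathcal{G})$ is assumed to be a DAG (and such an orientation of the converters is always available by Result~\ref{gres1}).

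First I would set up the partition: group the states $\{\theta_i,\omega_i\}_{i\in\mathcal{N}^{\textrm{A}}}$ and $\{v_i\}_{i\in\mathcal{N}^{\textrm{D}}}$, $\{i_{ij}\}_{ij\in\mathcal{E}^{\textrm{D}}}$ into blocks according to which connected subgrid they belong to, using the maps $\mathcal{M}^{\textrm{A}},\mathcal{M}^{\textrm{D}}$; the inputs $\zeta_{ij}$ are assigned to whichever subgrid "owns" the converter per the chosen orientation. Then I would examine the right-hand sides of~(\ref{linmod}) term by term to locate all cross-subgrid coupling. Within an AC subgrid, $\dot\theta_i=\omega_i$ and the swing equation couple only buses joined by AC lines, hence stay inside one AC subgrid; likewise the DC line and bus equations couple only buses joined by DC lines. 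The only terms that cross from one subgrid to another are the converter terms $\pm\hat v_j\zeta_{ij}$ in the swing equation and $\mp\zeta_{ij}$ in the DC bus equation — and these involve only the control input $\zeta_{ij}$, never the states on the far side of the converter. So $A$ is block-diagonal with respect to the subgrid partition, which trivially places it in $I_A(\Phi)$ (every off-diagonal block is zero, regardless of $\preceq$). For $B$: the term $\zeta_{ij}$ appears in the equations of both subgrid $\mathcal{M}^{\textrm{A}}(i)$ and subgrid $\mathcal{M}^{\textrm{D}}(j)$. If $ij\in\mathcal{C}$, we associate $\zeta_{ij}$ with the AC subgrid, so the only nonzero $B$-blocks for this input are in rows of subgrid $\mathcal{M}^{\textrm{A}}(i)$ and subgrid $\mathcal{M}^{\textrm{D}}(j)$, and by construction $\mathcal{M}^{\textrm{A}}(i)\prec\mathcal{M}^{\textrm{D}}(j)$ in $\Phi$ — so both blocks are consistent with $I_B(\Phi)$. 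The symmetric case $ji\in\mathcal{C}$ is identical with roles swapped.

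Having established $A\in I_A(\Phi)$ and $B\in I_B(\Phi)$, I would invoke Result~\ref{gres3} to conclude $P_{22}\in I_{P_{22}}(\Phi)$, hence~(\ref{linmod}) is poset-causal with respect to $\Phi$, completing the proof. I would also briefly note that $(\mathcal{P},\mathcal{G})$ can always be chosen to be a DAG (Result~\ref{gres1}), so the hypothesis is never vacuous. The main thing to be careful about — rather than a genuine obstacle — is the bookkeeping for $B$: one must confirm that assigning $\zeta_{ij}$ to the upstream subgrid is exactly what makes the downstream-row block of $B$ land in the block incidence algebra, which is where the full partition property (the converter decouples the states, so the coupling is input-only) is doing the real work. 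Everything else is a direct reading-off of the equations.
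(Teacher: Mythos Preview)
Your proposal is correct and follows exactly the paper's approach: establish the poset $\Phi$ from the DAG via Result~\ref{gres2}, observe that $A$ is block diagonal over the subgrid partition (hence trivially in $I_A(\Phi)$), check by inspection that $B\in I_B(\Phi)$ because each converter input $\zeta_{ij}$ appears only in the equations of its own subgrid and the one immediately downstream, and then invoke Result~\ref{gres3}. Your write-up simply makes explicit the ``by inspection'' step for $B$ that the paper leaves to the reader.
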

\begin{proof}
Assume $(\mathcal{P},\mathcal{G})$ is a DAG. By Result~\ref{gres2}, it specifies a poset, $\Psi$. The $A$ matrix is block diagonal, with each block corresponding to either an AC or DC subgrid. This implies $A\in\mathcal{I}_A(\Psi)$. We can also see by inspection that $B\in\mathcal{I}_B(\Psi)$. By Result~\ref{gres3}, system (\ref{linmod}) is poset-causal.
\end{proof}

Result~\ref{gres1} says that we can always choose the directions of the converters in $\mathcal{C}$ so that $(\mathcal{P},\mathcal{G})$ is acyclic. This means that any AC/DC grid has at least one poset-causal representation. The total number of acyclic orientations is $\left|\mathcal{X}(-1)\right|$, where $\mathcal{X}$ is the chromatic polynomial of the underlying undirected graph~\cite{Stanley1973aog}. Figure~\ref{fig:acyclic} shows an AC/DC grid with converter directions chosen to yield poset-causality.

\begin{figure}[!h]
\centering
\begin{tikzpicture}
    \node[ball](A1){AC 1};
    \node[ball, right=1cm of A1](D1){DC 1};
    \node[ball, right=1cm of D1](D2){DC 2};
    \node[ball, right=1cm of D2](D5){DC 5};
    \node[ball, below=1cm of A1](D3){DC 3};
    \node[ball, right=1cm of D3](A3){AC 3};
    \node[ball, right=1cm of A3](D4){DC 4};
    \node[ball, right=1cm of D4](A4){AC 4};
    \draw [->, thick] (A1) -- (D1);
    \draw [->, thick] (A1) -- (D3);
    \draw [->, thick] (D1) -- (A3);
    \draw [->, thick] (A3) -- (D2);
    \draw [->, thick] (D3) -- (A3);
    \draw [->, thick] (A3) -- (D4);
    \draw [->, thick] (D2) -- (A4);
    \draw [->, thick] (D4) -- (A4);
    \draw [->, thick] (D5) -- (A4);
\end{tikzpicture}
\caption{The nodes are AC and DC subgrids. Each edge is a VSC in $\mathcal{C}$, the directions of which make $(\mathcal{P},\mathcal{G})$ acyclic. The total number of such acyclic orientations for this system is $\left|\mathcal{X}(-1)\right|=392$.}
\label{fig:acyclic}
\end{figure}
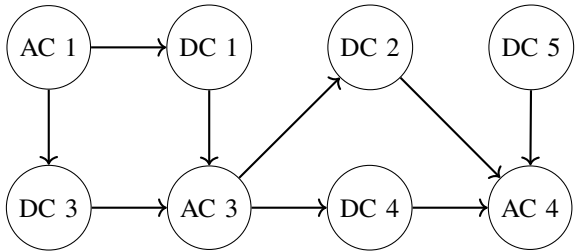

As is, the $\textrm{dq}$-frame model in Section~\ref{sec:dq} is not poset-causal. We can make it poset-causal with the modifications from Sections~\ref{sec:partsub} to \ref{sec:timescale}, which, by partitioning the states, determine the possible directions of the converters in $(\mathcal{P},\mathcal{G})$. For example, if we assume the currents are in steady state as in Section~\ref{sec:timescale}, then the converters fully partition the states, and we can pick the direction of each one. If we choose converter directions so that $(\mathcal{P},\mathcal{G})$ is a DAG, then the $\textrm{dq}$-frame model is poset-causal. We eschew a formal proof because the argument is similar to that of Lemma~\ref{PClin}, but more tedious.

\begin{corollary}\label{coordlin}
System (\ref{linmod}) is a coordinated system if it consists of either
\begin{itemize}
\item one AC and multiple DC subgrids, or
\item one DC and multiple AC subgrids.
\end{itemize}
\end{corollary}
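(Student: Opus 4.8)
The plan is to exhibit an orientation of the converters under which $(\mathcal{P},\mathcal{G})$ is a directed star, and then check that the poset it induces is exactly that of a coordinated system. By symmetry it suffices to treat a single AC subgrid $c$ together with DC subgrids $1,\dots,m^{\textrm{D}}$; the other case follows by exchanging the roles of ``AC'' and ``DC'' throughout.

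First I would use that $(\mathcal{P},\mathcal{G})$ is bipartite between AC and DC subgrids. Since there is only one AC subgrid, every edge of $\mathcal{G}$ is incident to $c$, and since $(\mathcal{P},\mathcal{G})$ is simple there is at most one edge between $c$ and any given DC subgrid. Hence the underlying undirected graph is a star centered at $c$, which is a tree. In the linear model of Section~\ref{sec:lin} the converters fully partition the states, so I am free to orient each converter's edge in $\mathcal{G}$ from $c$ to the DC subgrid it touches; this makes $(\mathcal{P},\mathcal{G})$ a directed tree of depth one, in particular a DAG.

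Next I would invoke Result~\ref{gres2} to obtain the poset $\Phi=(\mathcal{P},\preceq)$ specified by this DAG and read off its order: $c\preceq l$ for every DC subgrid $l$, and---because distinct DC subgrids are nonadjacent in the bipartite graph---there are no other nontrivial relations. So $\downarrow c=\mathcal{P}$ while $\downarrow l=\{l\}$ for each $l\in\downarrow c\setminus\{c\}$, which is precisely the structure of a coordinated system with coordinator $c$ (Section~\ref{sec:sc:coord}). Finally, Lemma~\ref{PClin} shows that, since $(\mathcal{P},\mathcal{G})$ is a DAG, system~(\ref{linmod}) is poset-causal with respect to $\Phi$; combined with the description of $\Phi$, this gives the claim.

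I do not anticipate a real obstacle: the one point needing care is confirming that a depth-one directed star induces the coordinated-system poset---specifically that no order relation is forced between two DC subgrids---which is immediate from bipartiteness. The argument is essentially a specialization of Lemma~\ref{PClin} rather than a fresh computation.
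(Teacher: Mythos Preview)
Your proposal is correct and matches the paper's own reasoning: the paper does not give a formal proof of the corollary, but the paragraph following it sketches exactly your argument---orient the converters out of the single AC (or DC) subgrid so that it becomes the coordinator and the remaining subgrids become the subsystems. Your write-up simply makes this precise by noting bipartiteness forces a star, invoking the full partition to choose the outward orientation, and then applying Lemma~\ref{PClin}.
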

The latter case might be referred to as an MTDC grid. Observe that if the converters are pointed out of the single AC (DC) grid, then it is the coordinator, and the DC (AC) subgrids are the subsystems. If the converters are pointed into the single AC (DC) grid, then it is the subsystem, and the DC (AC) subgrids are together the coordinator. As we'll see in the example in Section~\ref{sec:ex}, the latter setup is of more practical interest because the DC (AC) subgrids can each use local control, and only the single AC (DC) grid must use feedback over the whole system.

\begin{corollary}\label{lflin}
System (\ref{linmod}) is a leader-follower system if it consists of one AC and one DC subgrid.
\end{corollary}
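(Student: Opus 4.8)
The plan is to specialize the proof of Corollary~\ref{coordlin} to the case of exactly two subgrids, one AC and one DC, and then invoke the definitions in Section~\ref{sec:sc}. First I would note that by Lemma~\ref{PClin} (after orienting the single converter, or all parallel converters, consistently so that $(\mathcal{P},\mathcal{G})$ is a DAG, which is trivially possible with only two nodes), the system is poset-causal with poset $\Phi=(\mathcal{P},\preceq)$ where $\mathcal{P}=\{1,2\}$ consists of the AC subgrid and the DC subgrid. With two elements, an acyclic orientation makes one subgrid strictly upstream of the other, say $a\prec b$, so $\uparrow a=\{a\}$ and $\uparrow b=\{a,b\}$.

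Next I would match this to the chain of special cases in Section~\ref{sec:sc}. Since $(\mathcal{P},\mathcal{G})$ is a DAG on two nodes with a single directed edge $a\to b$, its graph is a directed tree (a single edge), so the system is hierarchical. It has depth one, so by the definition in Section~\ref{sec:sc:coord} it is a coordinated system: $a$ is the coordinator and $b$ is the lone subsystem with $\downarrow b=\{b\}$. Finally, a coordinated system with a single subsystem is by definition (Section~\ref{sec:sc:lf}) a leader-follower system, with $a$ the leader and $b$ the follower. This is really just Corollary~\ref{coordlin} with ``multiple'' replaced by ``one,'' plus the observation that a one-subsystem coordinated system is leader-follower.

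I do not anticipate a serious obstacle; the argument is essentially bookkeeping over the definitions. The one point requiring a sentence of care is the handling of multiple parallel converters between the single AC and single DC subgrid: each must be oriented the same way for $(\mathcal{P},\mathcal{G})$ to remain a simple DAG on two nodes, but this is always achievable and does not affect the block structure of $A$ and $B$, so Lemma~\ref{PClin} still applies verbatim. One could also simply cite Corollary~\ref{coordlin} for the coordinated-system part and then add the single-subsystem remark, which is the most economical route and the one I would take in the write-up. As with the $\textrm{dq}$-frame discussion, a fully formal proof is not needed since it mirrors Lemma~\ref{PClin}.
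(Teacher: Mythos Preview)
Your proposal is correct and matches the paper's own treatment: the paper gives no explicit proof of Corollary~\ref{lflin}, treating it as the immediate specialization of Corollary~\ref{coordlin} to a single subsystem, which by the definition in Section~\ref{sec:sc:lf} is a leader-follower system. Your write-up simply spells out this chain (Lemma~\ref{PClin} $\Rightarrow$ poset-causal on two nodes $\Rightarrow$ coordinated with one subsystem $\Rightarrow$ leader-follower), which is exactly the intended reasoning.
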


These results similarly extend to the $\textrm{dq}$-frame model as well. Note that other choices of subsystems can lead to different information structures. For example, even in a very complicated AC/DC grid, one can obtain a leader-follower system by only partitioning along a single boundary of converters.

\section{Example}\label{sec:ex}

In this example, we illustrate how an AC/DC grid's information structure can guide the design of a decentralized controller.

\subsection{Decentralized control}\label{sec:H2}

Decentralized control is intractable for general LTI systems~\cite{varaiya1978large,Blondel20001survey}. If the system is also poset-causal, optimal decentralized control is only slightly more complicated than the centralized regulator. Consider the following problem: minimize the $\mathcal{H}_2$ norm of $T_{zw}$, as defined in (\ref{Tzw}), subject to the communication constraint $K\in I_K(\Psi)$. Reference~\cite{shah2013h2} constructed the optimal solution to this problem in terms of a nested family of Riccati equations.

The controller in~\cite{shah2013h2} is dynamic in that new states are introduced, which serve as estimates of downstream states. This might be seen as an undesirable complication. Reference~\cite{kempker2013lq} constructs suboptimal static controllers for coordinated LTI systems. In short, one first constructs a local controller for the coordinator system, and then controllers for the subsystems that use feedback from the local and coordinator subsystems. We apply this perspective in this example.

\subsection{Test system}\label{sec:ex}

We construct a decentralized controller for a stylized test system based on the MTDC grid in~\cite{andreasson2017distributed}. We use the linear model of Section~\ref{sec:lin}. The system consists of the following parts.
\begin{itemize}
\item An MTDC grid with ten lines and six VSCs, with parameters from~\cite{andreasson2017distributed}. Each line is represented by a resistance and inductance, and each VSC an identical capacitance.
\item An AC subgrid is attached to each of the six VSCs of the MTDC grid. Each AC subgrid is approximated as a single inertia with normalized moment $10$ MWs/MVA and damping coefficient $0.1$ pu. The AC subgrids have no generator inputs or other controls.
\item A point-to-point DC link between AC subgrids 1 and 6. It consists of two VSCs, each with the same capacitance as those in the MTDC grid, and a DC line with parameters from row two of Table I in~\cite{andreasson2017distributed}.
\end{itemize}

The states are the six AC subgrid frequencies, the inductor currents of the ten DC lines in the MTDC grid and the DC line between AC 1 and AC 6, and the capacitor voltages of the eight VSCs' DC-side terminals. The only control inputs are the current/power transfers through the eight VSCs. The real power on each VSC's AC side is equal to the DC current times the nominal voltage, which is 1 pu.

The system is shown in Figure~\ref{fig:test}. Our goal is to design a controller in which VSCs 7 and 8 use local feedback and the six VSCs of the MTDC grid system-wide feedback. We hence view this as a leader-follower system, a special case of poset causality described in Section~\ref{sec:sc:lf}. The leader consists of AC 1, AC 6, VSC 7, VSC 8, and the DC line between. The follower consists of the other four AC subgrids, the MTDC grid, and VSCs 2 through 5. The boundary between the two systems consists of VSCs 1 and 6, which are oriented toward the MTDC grid. The other six VSCs do not need directions because they are within either the leader or follower subsystem.

\begin{figure}[!h]
\centering
\begin{tikzpicture}
    \node[ball](DC){MTDC grid};
    \node[block, above left=5mm and 11mm of DC](C2){VSC 2};
    \node[block, above right=5mm  and 11mm of DC](C5){VSC 5};
    \node[block, right=4mm of C2](C3){VSC 3};
    \node[block, left=4mm of C5](C4){VSC 4};
    \node[block, below left=8mm and 11mm of DC](C1){VSC 1};
    \node[block, below right=8mm and 11mm of DC](C6){VSC 6};
    \node [ball, above=5mm of C2] (AC2) {AC 2};
    \node [ball, above=5mm of C3] (AC3) {AC 3};
    \node [ball, above=5mm of C4] (AC4) {AC 4};
    \node [ball, above=5mm of C5] (AC5) {AC 5};
    \node [ball, below=5mm of C1] (AC1) {AC 1};
    \node [ball, below=5mm of C6] (AC6) {AC 6};
    \node[block, right=3mm of AC1](C7){VSC 7};
    \node[block, left=3mm of AC6](C8){VSC 8};
    \draw [->, thick] (C1) -- (DC);
    \draw [-, thick] (C2) -- (DC);
    \draw [-, thick] (C3) -- (DC);
    \draw [-, thick] (C4) -- (DC);
    \draw [-, thick] (C5) -- (DC);
    \draw [->, thick] (C6) -- (DC);
    \draw [-, thick] (C1) -- (AC1);
    \draw [-, thick] (C2) -- (AC2);
    \draw [-, thick] (C3) -- (AC3);
    \draw [-, thick] (C4) -- (AC4);
    \draw [-, thick] (C5) -- (AC5);
    \draw [-, thick] (C6) -- (AC6);
    \draw [-, thick] (AC1) -- (C7);
    \draw [-, thick] (C7) -- (C8);
    \draw [-, thick] (C8) -- (AC6);
    \draw[thick,dotted]     ($(AC2.north west)+(-0.5,0.3)$) rectangle ($(AC5.south east)+(0.5,-3.6)$);
    \draw[thick,dotted]     ($(AC1.south west)+(-0.5,-0.3)$) rectangle ($(AC6.south east)+(0.5,1.2)$);
\end{tikzpicture}
\caption{The test system. AC 1 through AC 6 are subgrids modeled as single inertias. The top and bottom boxes are the follower and leader systems, respectively, and the two intermediary VSCs form their boundary.}
\label{fig:test}
\end{figure}
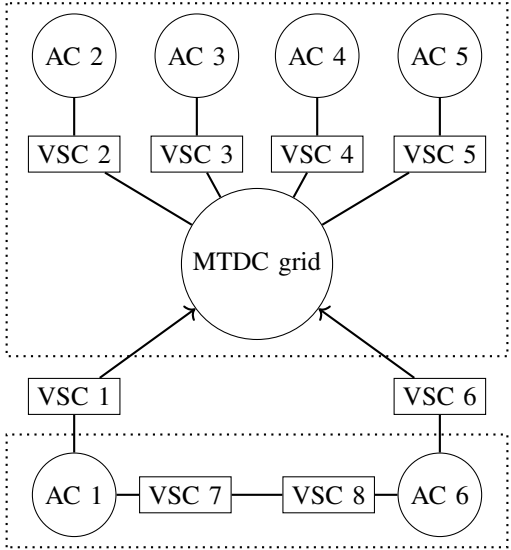

The optimal decentralized controller for general leader-follow systems is derived in~\cite{swigart2010explicit}. It is a special case of that for poset-causal systems~\cite{shah2013h2}, and similarly introduces new estimator states. We instead take a simpler approach motivated by~\cite{kempker2013lq}.
\begin{itemize}
\item We first solve for the optimal regulator for the leader system and close the loop.
\item We then solve for the optimal regulator for the full system, which consists of the follower system and the locally controlled leader system.
\end{itemize}
The resulting controller uses only feedback from the leader system for VSCs 7 and 8, and feedback from the full system for VSCs 1 through 6.

We remark that we could have used a different information structure to design a different decentralized controller for this system. For example, we could have swapped leader and follower roles, or treated each VSC plus local AC subgrid as a separate subsystem and implemented the poset-causal controller of~\cite{shah2013h2}. In the latter case, there would be multiple posets to choose from, each corresponding to a different acyclic orientation of the graph of converters and AC and DC subgrids.

\subsection{Simulations}

All computations were performed in Python using NumPy~\cite{harris2020array} and SciPy~\cite{virtanen2020scipy}. The figure was made with Matplotlib~\cite{hunter2007matplotlib}.

The controllers seek to drive the system to zero. The cost coefficients for all state and control variables were one. The initial frequency deviations of AC 1, AC 4, and AC 6 were $\omega_1^0=-0.01$, $\omega_4^0=-0.01$, and $\omega_6^0=0.02$ pu. All other states began at zero. We chose these initial conditions because the DC lines primarily move energy between AC subgrids. Had the initial frequencies not summed to zero, the system would have taken considerably longer to settle because the dampings and resistances dissipate little energy. In a more realistic setup, generator controls would provide power balancing and damping. Also note that the system does not oscillate because each AC subgrid was modeled as a single inertia.

The performances of the centralized and decentralized controllers were similar, with respective closed loop $\mathcal{H}_2$ norms 9.166 and 9.172. We only plot frequencies under the latter because the two look roughly the same. The top plot of Figure~\ref{fig:plots} shows how much more quickly the controlled system returns to the origin. This is because the converters seek to balance deviations rather than letting them damp out within each subgrid. Subgrids without initial frequency deviations remain at zero without control. With control, energy passes through their inertias, resulting in frequency deviations roughly an order of magnitude smaller than those shown.

\begin{figure}[htbp!]
\centering
\includegraphics[width = \columnwidth]{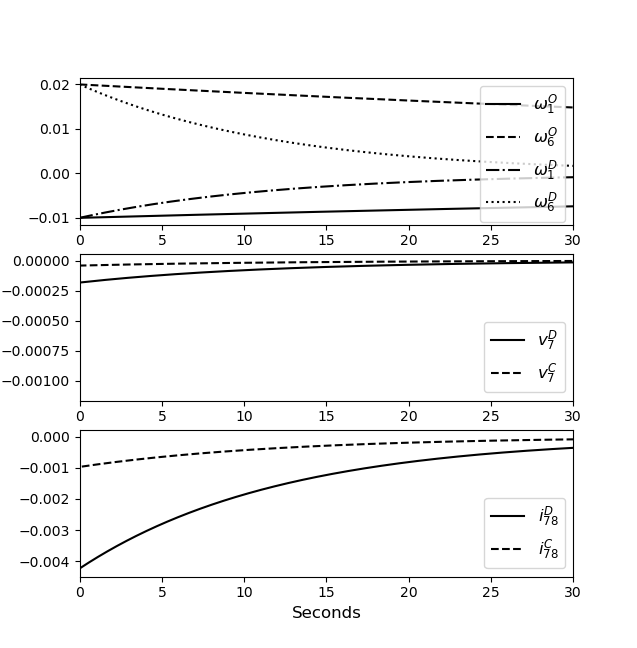}
\caption{The top plot shows the frequencies in AC 1 and AC 6 without control ($O$) and under the leader-follower controller ($D$). The middle plot shows the voltages at the terminal of VSC 7 under the leader follower ($D$) and centralized ($C$) controllers. The bottom plot shows the current through the DC line between VSCs 7 and 8 under the leader follower ($D$) and centralized ($C$) controllers.}
\label{fig:plots}
\end{figure}

The middle and bottom plots of Figure~\ref{fig:plots} show the voltage of VSC 7 and the current from VSC 7 to VSC 8 under the decentralized and centralized controllers. Note that both undergo transients in first 0.002 seconds that take them from zero to the starting value seen on the left side of each plot. Their deviations from zero are larger under the decentralized controller. This is because the leader controller, unaware of the rest of the system, expends more control effort in VSC 7 and VSC 8 than is optimal, resulting in a slightly higher $\mathcal{H}_2$ norm.

%Note that the open loop $\mathcal{H}_2$ norm is infinite because the DC voltages are marginally stable. We remedied this by adding a very small damping term to the capacitor dynamics.

\section{Conclusion}

Converters form actuated boundaries between AC and DC subgrids. We have shown that for both simple linear and $\textrm{dq}$-frame models, this topological property imparts all AC/DC grids with poset-causal information structures, and that special cases can have coordinated and leader-follower information structures. In a stylized example, we saw how this structure can inform the design of decentralized controllers.

The topological structure of AC/DC grids is obvious, and it is natural to ask how else it might be useful. Do poset-causality and other information structures enable other tools or analyses, e.g., via controllability or observability~\cite{kempker2012controllability,ter2021realization}, and do AC/DC grids possess other useful structural properties? If so, graph theoretic notions like treewidth and bipartiteness might aid in analysis, e.g., in characterizing the optimal acyclic orientation.

More concrete directions include conducting similar analyses for other power electronic interfaces like cycloinverters between AC subgrids; understanding the interplay of information structure with stability and control objectives; and making use of poset-causality without only considering controllers with the same information structure, e.g., allowing bidirectional communication between neighboring subgrids.

\ifCLASSOPTIONcaptionsoff
  \newpage
\fi

\bibliographystyle{IEEEtran}
\bibliography{MainBib,JATBib}

% Generated by IEEEtran.bst, version: 1.12 (2007/01/11)
\begin{thebibliography}{10}
\providecommand{\url}[1]{#1}
\csname url@samestyle\endcsname
\providecommand{\newblock}{\relax}
\providecommand{\bibinfo}[2]{#2}
\providecommand{\BIBentrySTDinterwordspacing}{\spaceskip=0pt\relax}
\providecommand{\BIBentryALTinterwordstretchfactor}{4}
\providecommand{\BIBentryALTinterwordspacing}{\spaceskip=\fontdimen2\font plus
\BIBentryALTinterwordstretchfactor\fontdimen3\font minus
  \fontdimen4\font\relax}
\providecommand{\BIBforeignlanguage}[2]{{%
\expandafter\ifx\csname l@#1\endcsname\relax
\typeout{** WARNING: IEEEtran.bst: No hyphenation pattern has been}%
\typeout{** loaded for the language `#1'. Using the pattern for}%
\typeout{** the default language instead.}%
\else
\language=\csname l@#1\endcsname
\fi
#2}}
\providecommand{\BIBdecl}{\relax}
\BIBdecl

\bibitem{van2016hvdc}
D.~Van~Hertem, O.~Gomis-Bellmunt, and J.~Liang, \emph{HVDC grids}.\hskip 1em
  plus 0.5em minus 0.4em\relax Wiley Online Library, 2016.

\bibitem{shah2013h2}
P.~Shah and P.~Parrilo, ``$\mathcal{H}_2$-optimal decentralized control over
  posets: A state-space solution for state-feedback,'' \emph{Automatic Control,
  IEEE Transactions on}, vol.~58, no.~12, pp. 3084--3096, Dec. 2013.

\bibitem{kempker2013lq}
P.~L. Kempker, A.~C. Ran, and J.~H. van Schuppen, ``{LQ} control for
  coordinated linear systems,'' \emph{IEEE Transactions on Automatic Control},
  vol.~59, no.~4, pp. 851--862, 2013.

\bibitem{cruz1978leader}
J.~Cruz, ``Leader-follower strategies for multilevel systems,'' \emph{IEEE
  Transactions on Automatic Control}, vol.~23, no.~2, pp. 244--255, 1978.

\bibitem{kempker2012controllability}
P.~L. Kempker, A.~C. Ran, and J.~H. van Schuppen, ``Controllability and
  observability of coordinated linear systems,'' \emph{Linear algebra and its
  applications}, vol. 437, no.~1, pp. 121--167, 2012.

\bibitem{ter2021realization}
S.~ter Horst and J.~Zeelie, ``Realization theory for poset-causal systems:
  controllability, observability and duality,'' \emph{Mathematics of Control,
  Signals, and Systems}, vol.~33, pp. 197--236, 2021.

\bibitem{johnson1993expandable}
B.~K. Johnson, R.~H. Lasseter, F.~L. Alvarado, and R.~Adapa, ``Expandable
  multiterminal {DC} systems based on voltage droop,'' \emph{IEEE Transactions
  on Power Delivery}, vol.~8, no.~4, pp. 1926--1932, 1993.

\bibitem{beerten2013modeling}
J.~Beerten, S.~Cole, and R.~Belmans, ``Modeling of multi-terminal {VSC} {HVDC}
  systems with distributed {DC} voltage control,'' \emph{IEEE Transactions on
  Power Systems}, vol.~29, no.~1, pp. 34--42, 2013.

\bibitem{andreasson2017distributed}
M.~Andreasson, D.~V. Dimarogonas, H.~Sandberg, and K.~H. Johansson,
  ``Distributed controllers for multiterminal {HVDC} transmission systems,''
  \emph{IEEE Transactions on Control of Network Systems}, vol.~4, no.~3, pp.
  564--574, 2017.

\bibitem{pirani2020PES}
M.~Pirani and J.~Taylor, ``Controllability of {AC} power networks with {DC}
  lines,'' \emph{Power Systems, IEEE Transactions on}, vol.~36, no.~2, pp.
  1649--1651, 2021.

\bibitem{gross2022acdc}
I.~Suboti{\'c} and D.~Gro{\ss}, ``Power-balancing dual-port grid-forming power
  converter control for renewable integration and hybrid ac/dc power systems,''
  \emph{IEEE Transactions on Control of Network Systems}, vol.~9, no.~4, pp.
  1949--1961, 2022.

\bibitem{vellaboyana2017DC}
B.~Vellaboyana and J.~Taylor, ``Optimal decentralized control of {DC}-segmented
  power systems,'' \emph{Automatic Control, IEEE Transactions on}, vol.~63,
  no.~10, pp. 3616--3622, Oct 2018.

\bibitem{godsil2001algebraic}
C.~D. Godsil and G.~Royle, \emph{Algebraic graph theory}, ser. Graduate Texts
  in Mathematics.\hskip 1em plus 0.5em minus 0.4em\relax Springer, 2001, vol.
  207.

\bibitem{aigner2012comb}
M.~Aigner, \emph{Combinatorial theory}.\hskip 1em plus 0.5em minus 0.4em\relax
  Springer Science \& Business Media, 2012.

\bibitem{oxley2011matroid}
J.~G. Oxley, \emph{Matroid theory}, 2nd~ed.\hskip 1em plus 0.5em minus
  0.4em\relax Oxford University Press, 2011.

\bibitem{findeisen1980control}
W.~Findeisen, F.~N. Bailey, M.~Brdys, K.~Malinowski, P.~Tatjewski, and
  A.~Wozniak, \emph{Control and coordination in hierarchical systems}.\hskip
  1em plus 0.5em minus 0.4em\relax John Wiley \& Sons, 1980.

\bibitem{swigart2010explicit}
J.~Swigart and S.~Lall, ``An explicit state-space solution for a decentralized
  two-player optimal linear-quadratic regulator,'' in \emph{American Control
  Conference}, 2010, pp. 6385--6390.

\bibitem{jovcic2019high}
D.~Jovcic, \emph{High voltage direct current transmission: converters, systems
  and {DC} grids}.\hskip 1em plus 0.5em minus 0.4em\relax John Wiley \& Sons,
  2019.

\bibitem{iravani2012vsc}
A.~Yazdani and R.~Iravani, \emph{Voltage-Sourced Converters in Power
  Systems}.\hskip 1em plus 0.5em minus 0.4em\relax Wiley - IEEE Press, 2010.

\bibitem{Stanley1973aog}
R.~P. Stanley, ``Acyclic orientations of graphs,'' \emph{Discrete Mathematics},
  vol.~5, no.~2, pp. 171 -- 178, 1973.

\bibitem{varaiya1978large}
N.~Sandell, P.~Varaiya, M.~Athans, and M.~Safonov, ``Survey of decentralized
  control methods for large scale systems,'' \emph{Automatic Control, IEEE
  Transactions on}, vol.~23, no.~2, pp. 108--128, 1978.

\bibitem{Blondel20001survey}
V.~D. Blondel and J.~N. Tsitsiklis, ``A survey of computational complexity
  results in systems and control,'' \emph{Automatica}, vol.~36, no.~9, pp. 1249
  -- 1274, 2000.

\bibitem{harris2020array}
C.~Harris, K.~Millman, S.~van~der Walt \emph{et~al.}, ``Array programming with
  {N}um{P}y,'' \emph{Nature}, vol. 585, no. 7825, pp. 357--362, 2020.

\bibitem{virtanen2020scipy}
P.~Virtanen, R.~Gommers, T.~E. Oliphant, M.~Haberland, T.~Reddy, D.~Cournapeau,
  E.~Burovski, P.~Peterson, W.~Weckesser, J.~Bright \emph{et~al.}, ``Scipy 1.0:
  fundamental algorithms for scientific computing in {P}ython,'' \emph{Nature
  Methods}, vol.~17, no.~3, pp. 261--272, 2020.

\bibitem{hunter2007matplotlib}
J.~D. Hunter, ``Matplotlib: A {2D} graphics environment,'' \emph{Computing in
  science \& engineering}, vol.~9, no.~03, pp. 90--95, 2007.

\end{thebibliography}
%\bibliography{MainBib,JATBib}
\end{document}